\documentclass[12pt]{article}

\def\update{Aug 22, 2008}

\usepackage{
amssymb,
amsthm}
\usepackage{graphicx}

\usepackage{amsmath}
\usepackage{amsfonts}

 \usepackage[colorlinks,linkcolor=black]{hyperref}

\newtheorem{definition}{\textbf{Definition}}

\newtheorem{theorem}{\textbf{Theorem}}
\newtheorem{corollary}{\textbf{Corollary}}

\newtheorem{lemma}{\textbf{Lemma}}
\newtheorem{exe}{\textbf{Example}}

\def\N {\mathbb{N}}
\def\Z {\mathbb{Z}}
\def\QQ {\mathbb{Q}}

\def\C {\mathbb{C}}

\begin{document}

\noindent
AWS 2008 Project
\hfill
{updated: \it \update}

\begin{center}

\medskip

{
\LARGE\mathversion{bold}
Algebraic Values of Transcendental Functions at Algebraic Points
}
\medskip

 \large

\medskip

{\it by }
\medskip

  Jingjing Huang, Diego Marques and Martin Mereb

\end{center}

\bigskip

\abstract{In this paper, the authors will prove that any subset of
$\overline{\QQ}$ can be the exceptional set of some transcendental
entire function. Furthermore, we could generalize this theorem to a
much more general version and present a unified proof.}


\section{Introduction and the main result}

\hspace{0.5 cm} In 1886, Weierstrass gave an example of a
transcendental entire function which takes rational values at all
rational points. He also suggested that there exist transcendental
entire functions which take algebraic values at any algebraic point.
Later, in \cite{Sta1}, St$\ddot{\mbox{a}}$ckel proved that for each
countable subset $\Sigma \subseteq \C$ and each dense subset $T
\subseteq \C$, there is a transcendental entire function $f$ such
that $f(\Sigma) \subseteq T$. Another construction due to
St$\ddot{\mbox{a}}$ckel produces an entire function $f$ whose
derivatives $f^{(s)}$, for $s = 0,1, 2,\dots,$ all map
$\overline{\QQ}$ into $\overline{\QQ}$, see \cite{Sta2}. A more
thorough discussion on this subject can be found in \cite{Mah}.
There are recent results due to Andrea Surroca on the number of
algebraic points where a transcendental analytic function takes
algebraic values, see \cite{Su2}. We were able to generalize these
two results of St$\ddot{\mbox{a}}$ckel to the following general
theorem.
\begin{theorem}
Given a countable subset $A \subseteq \C$  and for each integer $s
\geq 0$ with $\alpha \in A$, fix a dense subset $E_{\alpha,s}
\subseteq \C$. Then there exists a transcendental entire function
$f:\C \to \C$ such that $f^{(s)}(\alpha) \in E_{\alpha, s}$, for all
$\alpha\in A$ and all $s\geq 0$.
\end{theorem}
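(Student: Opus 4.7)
The plan is to construct $f$ as a series $f(z)=\sum_{k=1}^{\infty} c_k P_k(z)$ in which each summand adjusts exactly one target value $f^{(t_k)}(\gamma_k)$ without spoiling the previously arranged ones. First I would enumerate the countable set $A\times\N$ as a sequence $(\gamma_k,t_k)_{k\geq 1}$ with the property that whenever $\gamma_k$ has already appeared as some $\gamma_j$ with $j<k$, we have $t_k>t_j$; equivalently, for each $\alpha\in A$ the derivative orders paired with $\alpha$ appear in strictly increasing order. A standard diagonal listing achieves this.

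Next I would take as building blocks the polynomials
\[
P_k(z)=(z-\gamma_k)^{t_k}\prod_{\substack{j<k\\ \gamma_j\neq\gamma_k}}(z-\gamma_j)^{t_j+1}.
\]
A direct computation, using the ordering hypothesis to handle the case $\gamma_j=\gamma_k$, establishes three key properties: (i) $P_k^{(t_j)}(\gamma_j)=0$ for every $j<k$, so adding any multiple of $P_k$ preserves the targets already fixed; (ii) $P_k^{(t_k)}(\gamma_k)\neq 0$, so the value at step $k$ can be moved arbitrarily; and (iii) symmetrically, $P_j^{(t_k)}(\gamma_k)=0$ for every $j>k$, so no later summand can spoil the value at step $k$.

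Then I would choose the coefficients $c_k$ inductively. Setting $f_{k-1}:=\sum_{j<k}c_jP_j$, I pick a target $\tau_k\in E_{\gamma_k,t_k}$ so close to $f_{k-1}^{(t_k)}(\gamma_k)$ that the resulting
\[
c_k:=\frac{\tau_k-f_{k-1}^{(t_k)}(\gamma_k)}{P_k^{(t_k)}(\gamma_k)}
\]
satisfies $|c_k|\cdot\max_{|z|\leq k}|P_k(z)|<2^{-k}$. Density of $E_{\gamma_k,t_k}$ in $\C$ permits this and, when $\gamma_k$ equals a fixed element $\alpha_\ast\in A$, also lets me insist that $\tau_k\neq 0$.

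The main obstacle is the bookkeeping behind (i)--(iii); once those are in place the rest is routine. The bound $|c_k P_k(z)|<2^{-k}$ on $|z|\leq k$ yields uniform convergence on every compact subset of $\C$, so $f$ is entire and its derivatives are obtained by termwise differentiation. Combining (i) and (iii),
\[
f^{(t_k)}(\gamma_k)=f_{k-1}^{(t_k)}(\gamma_k)+c_k P_k^{(t_k)}(\gamma_k)=\tau_k\in E_{\gamma_k,t_k}
\]
for every $k$. The infinitely many nonzero $\tau_k$ occurring whenever $\gamma_k=\alpha_\ast$ force $f$ to have infinitely many nonvanishing derivatives at $\alpha_\ast$, so $f$ is not a polynomial, hence is transcendental. (The trivial case $A=\emptyset$ is handled by $f(z)=e^z$.)
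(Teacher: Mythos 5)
Your construction is correct, and it follows the same overall strategy as the paper: enumerate the pairs $(\alpha,s)$ diagonally, build polynomials that vanish to exactly the right orders so that the system becomes triangular, choose the coefficients one at a time to hit the prescribed dense sets, and keep a size bound on each term to guarantee an entire limit. Your properties (i)--(iii) play the role of the paper's Lemma~2, and your bound $|c_k|\max_{|z|\le k}|P_k(z)|<2^{-k}$ is a concrete instance of the paper's Lemma~1.

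Where you genuinely diverge is in the choice of polynomials and, as a consequence, in the proof that $f$ is transcendental. The paper defines $P_0=1$ and $P_n(z)=(z-\alpha_{j_n})P_{n-1}(z)$, so that $\deg P_n=n$; this makes $\{P_n\}$ a basis of $\C[z]$, and the paper exploits this through its Lemma~3 (uniqueness of coefficients): since every polynomial is a \emph{finite} linear combination of the $P_n$ and all the chosen $a_n$ are nonzero, $f$ cannot coincide with a polynomial. Your $P_k(z)=(z-\gamma_k)^{t_k}\prod_{j<k,\,\gamma_j\neq\gamma_k}(z-\gamma_j)^{t_j+1}$ are built directly from the vanishing conditions and are \emph{not} of strictly increasing degree (e.g.\ with the diagonal enumeration one gets degrees $0,1,3,3,\dots$), so the basis argument is unavailable to you. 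You compensate by insisting $\tau_k\neq 0$ whenever $\gamma_k=\alpha_\ast$, which forces $f$ to have nonvanishing derivatives of every order at $\alpha_\ast$ --- impossible for a polynomial. This is a shorter and more elementary transcendence argument that sidesteps the paper's Lemma~3 entirely; the trade-off is that the paper's version keeps a tidy graded basis, which is aesthetically cleaner and makes the stated coefficient bound $|a_n|\le 1/(C_n n!)$ uniform in $n$, whereas your bound is tailored term by term. Both are fully rigorous.

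One small remark: you should note (as you implicitly do) that a dense subset of $\C$ meets every punctured disk, so requiring $\tau_k\neq 0$ costs nothing; and the case $A=\varnothing$ you handle separately, which the paper's formulation also implicitly requires.
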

Let $f$ be given, and denote by $S_f$ the set of all algebraic
points $\alpha \in \C$, for which $f(\alpha)$ is also algebraic. An
interesting problem is to determine properties of $S_f$, which we
name as the exceptional set of $f$. In the conclusion we will show
that for any $A \subseteq \overline{\QQ}$ there is a transcendental
entire function $f$ such that $A$ is the exceptional set of $f$.

Without referring to Theorem 1, we have the following special
examples:
\begin{exe}
 Arbitrary finite subsets of algebraic
numbers are easily seen to be exceptional. For instance, if $f_1(z)
= e^{(z-\alpha_1) \cdots (z-\alpha_k)},$ then the Hermite-Lindemann
theorem implies $S_{f_1} = \{\alpha_1, \ldots, \alpha_k\}$. If
$f_2(z) = e^z + e^{z+1} \mbox{and}\ f_3(z) = e^{z\pi + 1}$, then the
Lindemann-Weierstrass and Baker theorems imply $S_{f_2} = S_{f_3} =
\varnothing$.
\end{exe}
\begin{exe}
Some well-known infinite sets are also exceptional, for instance, if
$f_4(z) = 2^z, f_5(z) = e^{i\pi z}$, then $S_{f_4} = S_{f_5} = \QQ$,
by the Gelfond-Schneider theorem.
\end{exe}
\begin{exe}
Assuming Schanuel's conjecture to be true, it is easy to prove that
if $f_6(z) = \sin(\pi z)e^z, f_7(z) = 2^{3^z}$ and $f_8(z) =
2^{2^{2^{z - 1}}}$, then $S_{f_6} = S_{f_7} = \Z$ and $S_{f_8} =
\N.$
\end{exe}

These examples are just special case of our Theorem 1, hitherto can
be proved uniformly here.


\section{Preliminary results}
Before going to the proof of the theorem, we need couple of lemmas.

\begin{lemma}
Let $\{P_n(z)\}_{n\geq 0}$ be a sequence of complex polynomials,
where $\deg P_n=n$. And let $\{C_n\}_{n\geq 0}$ be a sequence of
positive constants providing that $|P_n(z)|\leq C_n\max\{|z|,1\}^n$.
If a sequence of complex numbers $\{a_n\}_{n \geq 0}$ satisfies
$|a_n|\leq \frac{1}{C_n n!}$, then the series $\sum_{n = 0}^\infty
a_nP_n(z)$ converges absolutely and uniformly on any compact sets,
particularly this gives an entire function.
\end{lemma}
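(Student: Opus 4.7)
The plan is to apply the Weierstrass $M$-test, using the hypothesis $|a_n| \leq \frac{1}{C_n n!}$ precisely to kill the factor $C_n$ coming from the polynomial bound, leaving only the $\frac{1}{n!}$ that makes the exponential series converge.

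First I would fix an arbitrary compact set $K \subseteq \C$ and choose $R \geq 1$ with $K \subseteq \{|z| \leq R\}$. On $K$ we then have $\max\{|z|,1\} \leq R$, so the hypothesis $|P_n(z)| \leq C_n \max\{|z|,1\}^n$ yields $|P_n(z)| \leq C_n R^n$ for every $z \in K$. Combining this with $|a_n| \leq \frac{1}{C_n n!}$ gives
\[
|a_n P_n(z)| \;\leq\; \frac{1}{C_n n!} \cdot C_n R^n \;=\; \frac{R^n}{n!}
\]
uniformly in $z \in K$.

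Next I would invoke the $M$-test with $M_n = R^n/n!$. Since $\sum_{n\geq 0} R^n/n! = e^R < \infty$, the series $\sum a_n P_n(z)$ converges absolutely and uniformly on $K$. Because $K$ was an arbitrary compact subset of $\C$, the convergence is uniform on compacta, and each partial sum is a polynomial (hence entire), so the limit $f(z) = \sum_{n\geq 0} a_n P_n(z)$ is entire by the standard theorem on uniform limits of holomorphic functions.

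There is essentially no obstacle here: the only place one has to be a little careful is in using $\max\{|z|,1\}$ rather than $|z|$, which is why I choose $R \geq 1$ from the start so that $\max\{|z|,1\} \leq R$ holds uniformly on $K$. The rest is bookkeeping for the $M$-test.
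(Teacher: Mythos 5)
Your proof is correct and takes essentially the same route as the paper: both kill the $C_n$ factor using the hypothesis on $|a_n|$ and compare against the exponential series $\sum R^n/n!$ (the paper writes this as $\exp(\max\{|z|,1\})$). Your version is slightly more explicit in fixing a compact $K$, choosing $R\geq 1$, and invoking the $M$-test by name, but the underlying estimate is identical.
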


\begin{proof}

When $|a_n|\leq \frac{1}{C_n n!}$, we have:

$$
\sum_{n = 0}^\infty |a_n||P_n(z)| \leq \sum_{n = 0}^\infty
\frac{1}{C_n n!}C_n\max\{|z|,1\}^n \leq \exp(\max\{|z|,1\}),
$$
so $\sum_{n = 0}^\infty a_nP_n(z)$ converges absolutely and
uniformly on any compact sets. Therefore this series will produce an
entire function.
\end{proof}

Now, let's enumerate the set $A$ in Theorem 1 as
$\{\alpha_1,\alpha_2,\alpha_3,\cdots\}$

    For $n\geq 1$, setting $n = 1+2+3+\cdots+m_n + j_n$, where $m_n \geq 0$ and $1 \leq j_n \leq m_n + 1$.  Next, construct a sequence of polynomials as follows
\begin{center}
$P_0(z) = 1$ and define recursively $P_n(z) =
(z-\alpha_{j_n})P_{n-1}(z)$ for $n\geq 1$
\end{center}
Here we list the first few polynomials:
  \begin{eqnarray*}
P_0(z) & = & 1 \\
P_1(z) & = & (z - \alpha_1) \\
P_2(z) & = & (z - \alpha_1)^2 \\
P_3(z) & = & (z - \alpha_1)^2(z - \alpha_2) \\
P_4(z) & = & (z - \alpha_1)^3(z - \alpha_2) \\
P_5(z) & = & (z - \alpha_1)^3(z - \alpha_2)^2 \\
P_6(z) & = & (z - \alpha_1)^3(z - \alpha_2)^2(z - \alpha_3) \\
P_7(z) & = & (z - \alpha_1)^4(z - \alpha_2)^2(z - \alpha_3) \\
&\vdots&
\end{eqnarray*}

For convenience, let's denote $i_n=m_n+1-j_n$. For any given
$i\geq0$ and $j\geq1$ there exists a unique $n\geq1$ such that
$i_n=i$ and $j_n=j$, namely $n=\frac{(i+j)(i+j-1)}{2}+j$.
\begin{lemma}
For $n\geq 1$, we have $P_{n-1}^{(i_n)}(\alpha_{j_n})\neq0$ and
$P_{l}^{(i_n)}(\alpha_{j_n})=0$ when $l\geq n$.
\end{lemma}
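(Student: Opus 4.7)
The plan is to reduce both assertions to a single multiplicity computation. Let $v_j(l)$ denote the multiplicity of $\alpha_j$ as a root of $P_l$. Iterating the recursion $P_l = (z-\alpha_{j_l})P_{l-1}$ immediately gives $v_j(l) = \#\{k : 1 \leq k \leq l,\ j_k = j\}$, so both parts of the lemma will follow from comparing $v_{j_n}(n-1)$ and $v_{j_n}(l)$ for $l \geq n$.

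The heart of the argument is the claim $v_{j_n}(n-1) = i_n$. Using the explicit bijection recalled just before the lemma, an index $k \geq 1$ satisfies $j_k = j_n$ if and only if $k = \frac{(i+j_n)(i+j_n-1)}{2} + j_n$ for some integer $i \geq 0$. This expression is strictly increasing in $i$ and equals $n$ precisely when $i = i_n$; hence the indices $k \leq n-1$ with $j_k = j_n$ correspond bijectively to $i \in \{0, 1, \dots, i_n - 1\}$, giving exactly $i_n$ of them.

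Granted the claim, I would write $P_{n-1}(z) = (z - \alpha_{j_n})^{i_n} Q(z)$ with $Q(\alpha_{j_n}) \neq 0$, and the Leibniz (or Taylor) expansion of the right-hand side at $z = \alpha_{j_n}$ yields $P_{n-1}^{(i_n)}(\alpha_{j_n}) = i_n!\, Q(\alpha_{j_n}) \neq 0$. For $l \geq n$, the monotonicity of $v_{j_n}$ gives $v_{j_n}(l) \geq v_{j_n}(n) = i_n + 1$, so $P_l$ has a zero of order strictly greater than $i_n$ at $\alpha_{j_n}$; in particular every derivative of $P_l$ of order at most $i_n$ vanishes there, which is the second assertion.

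The only nonroutine point is the counting identity $v_{j_n}(n-1) = i_n$, and even this is essentially forced by the diagonal enumeration already worked out before the lemma. I anticipate no serious obstacle: the main risk is an off-by-one slip in pinning down which $i$'s give $k < n$ versus $k \leq n$, which the strict monotonicity of the triangular number formula handles cleanly.
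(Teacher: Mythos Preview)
Your proposal is correct and follows essentially the same approach as the paper: both arguments reduce the lemma to showing that $\alpha_{j_n}$ is a zero of $P_{n-1}$ of multiplicity exactly $i_n$ (and of multiplicity at least $i_n+1$ in $P_l$ for $l\ge n$). The only cosmetic difference is that the paper reads this multiplicity off from the explicit factorization $P_l(z)=(z-\alpha_1)^{m_l}\cdots(z-\alpha_{m_l})(z-\alpha_1)\cdots(z-\alpha_{j_l})$, whereas you obtain it by counting the indices $k\le n-1$ with $j_k=j_n$ via the diagonal bijection.
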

\begin{proof}
From the definition of $P_n(z)$, we can write explicitly
$$P_l(z)=(z-\alpha_1)^{m_l}(z-\alpha_2)^{m_l-1}\cdots(z-\alpha_{m_l})(z-\alpha_1)\cdots(z-\alpha_{j_l})$$
It follows that $\alpha_{j_n}$ is a zero of $P_{n-1}(z)$ with
multiplicity $i_n$, which means $P_{n-1}^{(i_n)}(\alpha_{j_n})\neq
0$. On the other hand, if $l\geq n$, then $\alpha_{j_n}$ is a zero
of $P_{l}(z)$ with multiplicity at least $i_n+1$, which implies
$P_{l}^{(i_n)}(\alpha_{j_n})=0$.

\end{proof}

\begin{lemma}
If $\sum_{k=0}^{\infty}a_k P_k(z)=\sum_{k=0}^{\infty}b_k P_k(z)$ for
all $z\in \C$, then $a_k=b_k$ for each $k\geq 0$.
\end{lemma}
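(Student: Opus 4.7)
The plan is to subtract and then apply Lemma~2 inductively. Set $c_k = a_k - b_k$, so that the hypothesis becomes $F(z) := \sum_{k=0}^{\infty} c_k P_k(z) \equiv 0$ on $\C$, whence every derivative $F^{(i)}(z)$ vanishes identically as well. Termwise differentiation of the defining series is legitimate in the intended setting of the paper: the sums under consideration arise from coefficients that satisfy the bound of Lemma~1, so they converge absolutely and uniformly on every compact subset of $\C$, and the derivative of the sum is therefore the sum of the derivatives. Under this standing assumption, the proof reduces to showing $c_N = 0$ for every $N \geq 0$ by induction on $N$.

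For the inductive step, fix $N \geq 0$ and set $n = N + 1$, so that $(i_n, j_n)$ is defined. Differentiating the identity $F \equiv 0$ a total of $i_n$ times and evaluating at $z = \alpha_{j_n}$ gives
$$0 \;=\; \sum_{k=0}^{\infty} c_k \, P_k^{(i_n)}(\alpha_{j_n}).$$
By Lemma~2, every term with $k \geq n = N+1$ is zero, so only the terms $k = 0, 1, \ldots, N$ survive. The inductive hypothesis $c_0 = \cdots = c_{N-1} = 0$ kills the terms $k = 0, \ldots, N-1$, leaving $c_N \, P_N^{(i_n)}(\alpha_{j_n}) = 0$. Since Lemma~2 also asserts $P_N^{(i_n)}(\alpha_{j_n}) \neq 0$, we conclude $c_N = 0$. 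The base case $N = 0$ is the same calculation: with $n = 1$ we have $i_1 = 0$ and $j_1 = 1$, and plain evaluation at $\alpha_1$ gives $c_0 = c_0 \, P_0(\alpha_1) = 0$ (all other $P_k(\alpha_1)$ vanishing by Lemma~2).

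The only real obstacle is conceptual rather than computational: one has to recognize that Lemma~2 is precisely tailored so that the pair $(i_{N+1}, j_{N+1})$ annihilates every polynomial of index $k \geq N+1$ at the chosen order of derivative, while leaving $P_N$ alive, and that the induction on $N$ then disposes of the indices $k < N$ coming from below. Once termwise differentiation is granted, the remainder of the argument is a direct unwinding of the two conclusions of Lemma~2.
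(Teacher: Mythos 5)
Your proof is correct and follows essentially the same route as the paper's: reduce to showing a vanishing series $\sum c_k P_k \equiv 0$ forces $c_k = 0$, then induct on the index, using $(i_{n+1}, j_{n+1})$ together with the two conclusions of Lemma~2 to isolate $c_n$ after the earlier coefficients have been killed by the inductive hypothesis. Your explicit remark justifying termwise differentiation (via the uniform-on-compacts convergence coming from the Lemma~1 bounds, which hold in every application the paper makes) is a small extra care the paper's proof leaves implicit, but the underlying argument is identical.
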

\begin{proof}
It suffice to prove that if $g(z):=\sum_{k=0}^{\infty}a_k P_k(z)=0$
for all $z\in \C$, then $\{a_k\}_{k\geq 0}$ is identically $0$.
Notice that $a_0=g(\alpha_1)=0$. Assuming $a_0,a_1,\ldots,a_{n-1}$
are all 0, by Lemma 2, we have
\begin{eqnarray*}
0&=&\sum_{k=0}^{\infty}a_k
P_k^{(i_{n+1})}(\alpha_{j_{n+1}})\\
&=&\sum_{k=0}^{n-1}a_k P_k^{(i_{n+1})}(\alpha_{j_{n+1}})+a_n
P_n^{(i_{n+1})}(\alpha_{j_{n+1}})+ \sum_{k=n+1}^{\infty}a_k
P_k^{(i_{n+1})}(\alpha_{j_{n+1}})\\
&=&a_n P_n^{(i_{n+1})}(\alpha_{j_{n+1}})
\end{eqnarray*}
Since $P_n^{(i_{n+1})}(\alpha_{j_{n+1}})\neq 0$, we have $a_n=0$.
Hence the proof will be completed by induction.

\end{proof}

Now we are able to prove our theorem.


\section{Proof of the theorem}
We are going to construct the desired transcendental entire function
by fixing the coefficients in the series $\sum_{k = 0}^\infty a_k
P_k(z)$ recursively, where the sequence $\{P_k\}_{k\geq 0}$ has been
defined in Section 2.

First, by Lemma 1, the condition $|a_k|\leq \frac{1}{C_k k!}$ will
ensure $\sum_{k = 0}^\infty a_k P_k(z)$ to be entire.

Now we will fix the coefficients $a_k$ recursively. For $n\geq 1$,
we denote $E_n=E_{\alpha_{j_n},i_n}$  and let the numbers
$\beta_n=\sum_{k=0}^{\infty}a_k P_k^{(i_n)}(\alpha_{j_n})$. We are
going to choose the value of $a_k$ so that $\beta_n\in
E_{\alpha_{j_n},i_n}=E_n$ for all $n\geq 1$.

By Lemma 2, we know that $P_{l}^{(i_n)}(\alpha_{j_n})=0$ when $l\geq
n$, so $\beta_n$ is actually the finite sum $\sum_{k=0}^{n-1}a_k
P_k^{(i_n)}(\alpha_{j_n})$. Notice that $\beta_1=a_0
P_0^{(0)}(\alpha_1)=a_0$ and $E_1$ is dense, we can fix a value for
$a_0$ such that $0<|a_0|\leq \frac{1}{C_0}$ and $\beta_1\in E_1$.
Now suppose that the values of $\{a_0,a_1,\cdots,a_{n-1}\}$ are well
fixed such that $0<|a_k|\leq \frac{1}{C_k k!}$ and $\beta_k\in E_k$
for $0\leq k\leq n-1$.  By Lemma 2, we know
$P_{n}^{(i_{n+1})}(\alpha_{j_{n+1}})\neq0$, so we can pick a proper
value of $a_n$ such that $0<|a_n|\leq \frac{1}{C_n n!}$ and
$\beta_n=\sum_{k=0}^{n-1}a_k P_k^{(i_{n+1})}(\alpha_{j_{n+1}})+a_n
P_{n}^{(i_{n+1})}(\alpha_{j_{n+1}})\in E_n$.

So now by induction all the $a_k$ are well chosen such that for all
$k\geq 1$ we have $0<|a_k|\leq \frac{1}{C_k k!}$ and $\beta_k\in
E_k$. Thus by Lemma 1, the function $f(z)=\sum_{k=0}^{\infty}a_k
P_k(z)$ is an entire function and for any $i\geq0,j\geq1$ we have
$f^{(i)}(\alpha_j)=\sum_{k=0}^{\infty}a_k
P_k^{(i)}(\alpha_j)=\beta_n\in E_n=E_{\alpha_j,i}$ where $n$ is the
unique integer such that $i_n=i, j_n=j$. Taking into account that
every polynomial could be expressed as a finite linear combination
of the $\{P_k\}$, and all the $\{a_k\}$ here are not 0, so by Lemma
3 we conclude that $f(z)$ is not a polynomial. Hence $f(z)$ is the
desired transcendental entire function.

\bigskip

From the construction of the proof, we can easily see that in fact
there are uncountably many functions satisfying the properties
required in Theorem 1.


\section{Applications to Exceptional Sets}

\hspace{0.5 cm}
We recall the following definition
\begin{definition}
Let $f$ be an entire function.  We define {\em the exceptional set
of $f$} to be $$S_f = \{ \alpha \in \overline{\QQ}\  |\  f(\alpha) \in \overline{\QQ}\}$$
\end{definition}

We list some of the more interesting consequences of Theorem 1 with
the choice of $A,\ E_{\alpha,s}$ noted in parentheses.
\begin{corollary}
For each countable subset $\Sigma \subseteq \C$ and each dense
subset $T \subseteq \C$ there is a transcendental entire function
$f$ such that $f^{(s)}(\Sigma) \subseteq T$ for $s \geq 0$. ($A =
\Sigma,\ E_{\alpha,s} = T$)
\end{corollary}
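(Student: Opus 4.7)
The plan is to obtain this corollary as an immediate specialization of Theorem 1, so no new machinery is required. First I would set $A := \Sigma$ and $E_{\alpha,s} := T$ for every $\alpha \in A$ and every integer $s \geq 0$. I would then verify that the hypotheses of Theorem 1 are satisfied with these choices: the set $A = \Sigma$ is countable by hypothesis on $\Sigma$, and each $E_{\alpha,s}$ equals the fixed set $T \subseteq \C$, which is dense by hypothesis. Thus the construction of Theorem 1 applies verbatim.

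Next, I would invoke Theorem 1 to produce a transcendental entire function $f : \C \to \C$ with $f^{(s)}(\alpha) \in E_{\alpha,s}$ for every $\alpha \in A$ and every $s \geq 0$. Unpacking this with our choice of data, the conclusion reads $f^{(s)}(\alpha) \in T$ for every $\alpha \in \Sigma$ and every $s \geq 0$. Taking the union over $\alpha \in \Sigma$ gives $f^{(s)}(\Sigma) \subseteq T$ for all $s \geq 0$, which is exactly the desired statement.

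There is essentially no obstacle here; the only thing to notice is that no algebraicity assumption on $\Sigma$ or on $T$ is ever used in Theorem 1 (the proof treats the points as arbitrary complex numbers and the target sets as arbitrary dense subsets of $\C$), so the specialization is legitimate. One may also observe, as the concluding remark of Section 3 notes, that the construction produces uncountably many such $f$, so the corollary in fact yields an uncountable family of transcendental entire functions sending $\Sigma$ and all its derivative images into $T$.
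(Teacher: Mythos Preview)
Your proposal is correct and matches the paper's own treatment: the paper gives no separate proof of this corollary, merely recording the choices $A=\Sigma$ and $E_{\alpha,s}=T$ in parentheses, which is exactly what you do. Your added remarks about no algebraicity assumption being needed and about the uncountable family of such $f$ are accurate elaborations consistent with the paper.
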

\begin{corollary}
Let $A \subseteq \C$ be countable and dense in $\C$, then there is a
transcendental entire function $f$ such that $f^{(s)}(A) \subseteq
A$, for $s \geq 0$. ($E_{\alpha,s} = A$)
\end{corollary}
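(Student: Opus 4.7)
The plan is to invoke Theorem 1 directly with the specific choice indicated in the parenthetical hint. Given the countable set $A \subseteq \C$ which is assumed to be dense, I would first set $E_{\alpha,s} := A$ for every $\alpha \in A$ and every integer $s \geq 0$. Since $A$ is dense in $\C$ by hypothesis, each $E_{\alpha,s}$ is a dense subset of $\C$, so all the hypotheses of Theorem 1 are met.

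Applying Theorem 1 to this data, I obtain a transcendental entire function $f : \C \to \C$ with the property that $f^{(s)}(\alpha) \in E_{\alpha,s} = A$ for every $\alpha \in A$ and every $s \geq 0$. Rewriting this conclusion, $f^{(s)}(A) \subseteq A$ for every $s \geq 0$, which is exactly the required statement.

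There is essentially no obstacle here: the whole point of Theorem 1 is to allow us to prescribe, for each algebraic (or merely complex) point and each order of derivative, a dense target set independently. Corollary 2 is then just the specialization where all target sets coincide with $A$ itself; the density of $A$ in $\C$ is precisely what makes this specialization legal. The transcendence of $f$ is built into the conclusion of Theorem 1, so no further verification is needed.
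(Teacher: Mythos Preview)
Your proof is correct and follows exactly the approach indicated by the paper: the corollary is stated with the parenthetical choice $E_{\alpha,s}=A$, and the paper gives no further argument beyond this specialization of Theorem~1. Your write-up simply spells out why this choice is legitimate (density of $A$) and why the conclusion of Theorem~1 yields $f^{(s)}(A)\subseteq A$, which is precisely what is intended.
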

\begin{corollary}
There exists a transcendental entire function such that
$f^{(s)}(\overline{\QQ})\subseteq \QQ(i)$, for $s\geq 0$. ($A =
\overline{\QQ},\ E_{\alpha,s} = \QQ(i)$)
\end{corollary}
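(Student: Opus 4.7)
The plan is to deduce the corollary as a direct instance of Theorem 1, following the parenthetical hint: I would take $A = \overline{\QQ}$ and $E_{\alpha,s} = \QQ(i)$ for every $\alpha \in \overline{\QQ}$ and every $s \geq 0$. Once the two hypotheses of Theorem 1 are verified for this choice, the theorem immediately produces a transcendental entire function $f$ with $f^{(s)}(\alpha) \in \QQ(i)$ for every $\alpha \in \overline{\QQ}$ and every $s \geq 0$, which is precisely the desired inclusion $f^{(s)}(\overline{\QQ}) \subseteq \QQ(i)$.

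The only work, therefore, is to check the two input conditions of Theorem 1. First, I would observe that $\overline{\QQ}$ is countable: for each $n \geq 1$ there are only countably many polynomials in $\Z[z]$ of degree $n$, each with at most $n$ complex roots, so $\overline{\QQ}$ is a countable union of finite sets. Second, I would note that for every pair $(\alpha,s)$ the set $E_{\alpha,s} = \QQ(i) = \QQ + \QQ i$ is dense in $\C$, since $\QQ$ is dense in $\R$; this density is uniform in the parameters $\alpha$ and $s$, which is all Theorem 1 requires.

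I do not anticipate any real obstacle here, because the corollary is essentially a translation of Theorem 1 into the language of exceptional sets. The reason for stating it separately is illustrative: it exhibits a single transcendental entire function all of whose derivatives map the entire field $\overline{\QQ}$ into the comparatively tiny subfield $\QQ(i) \subseteq \overline{\QQ}$, so in particular $\overline{\QQ} \subseteq S_f$, giving a striking strengthening of the classical examples in the introduction.
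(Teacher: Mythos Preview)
Your proposal is correct and matches exactly what the paper intends: the corollary is stated without a separate proof, the only indication being the parenthetical choice $A=\overline{\QQ}$, $E_{\alpha,s}=\QQ(i)$, and you have simply supplied the routine verifications (countability of $\overline{\QQ}$ and density of $\QQ(i)$ in $\C$) that make Theorem~1 applicable.
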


The next result shows that in particular every $A \subseteq \overline{\QQ}$ is an exceptional set of a transcendental entire function.
\begin{theorem}
If $A \subseteq \overline{\QQ}$, then there is a transcendental entire function, such that $S_{f^{(s)}} = A$ for $s \geq 0$.
\end{theorem}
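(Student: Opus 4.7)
The plan is to reduce Theorem 2 directly to Theorem 1 by making a suitable choice of the dense sets $E_{\alpha,s}$. Since $A$ is specified as a subset of the countable set $\overline{\QQ}$, I would take the countable set in the hypothesis of Theorem 1 to be $\overline{\QQ}$ itself, and then split its elements into the ``algebraic-valued'' and ``transcendental-valued'' cases.

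More concretely, I would set
\[
E_{\alpha,s} = \begin{cases} \overline{\QQ} & \text{if } \alpha \in A, \\ \C \setminus \overline{\QQ} & \text{if } \alpha \in \overline{\QQ} \setminus A, \end{cases}
\]
for each $s \geq 0$. The first step is to check that both choices are dense subsets of $\C$: $\overline{\QQ}$ contains $\QQ$ which is already dense, and $\C \setminus \overline{\QQ}$ is dense because $\overline{\QQ}$ is countable (so its complement meets every nonempty open disk). Thus the hypotheses of Theorem 1 are satisfied with the countable set $\overline{\QQ}$ and these dense targets.

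Applying Theorem 1 yields a transcendental entire function $f : \C \to \C$ such that $f^{(s)}(\alpha) \in E_{\alpha,s}$ for every $\alpha \in \overline{\QQ}$ and every $s \geq 0$. By construction, for each $s \geq 0$ and each $\alpha \in \overline{\QQ}$ we have $f^{(s)}(\alpha) \in \overline{\QQ}$ if and only if $\alpha \in A$. Recalling the definition $S_{f^{(s)}} = \{\alpha \in \overline{\QQ} : f^{(s)}(\alpha) \in \overline{\QQ}\}$, this immediately gives $S_{f^{(s)}} = A$ for every $s \geq 0$, as required.

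There is essentially no obstacle here beyond verifying the density statements; the whole content of the theorem has been packaged into Theorem 1. The only mildly subtle point worth mentioning is that the same function $f$ works simultaneously for all derivatives $s \geq 0$, but this is automatic from the freedom Theorem 1 allows us in prescribing $E_{\alpha,s}$ independently for each $s$.
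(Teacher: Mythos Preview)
Your proof is correct and follows the same idea as the paper: apply Theorem 1 with the countable set $\overline{\QQ}$ and choose $E_{\alpha,s}$ to be $\overline{\QQ}$ or $\C\setminus\overline{\QQ}$ according to whether $\alpha\in A$. The paper's own proof does exactly this but introduces an unnecessary case split on whether $A$ and $\overline{\QQ}\setminus A$ are finite or infinite (in order to interleave the enumeration), whereas your formulation handles all cases uniformly; your version is slightly cleaner for that reason.
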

\begin{proof}
Suppose that $A$ and $\overline{\QQ} \backslash A$ are both
infinite, thus we can enumerate $\overline{\QQ} = \{\alpha_1,
\alpha_2, \ldots \}$ where $A = \{\alpha_1,
\alpha_3,...,\alpha_{2n+1},...\}$. Set $E_{\alpha_{2n+2},s} = \C
\backslash \overline{\QQ}$ and $E_{\alpha_{2n+1},s} =
\overline{\QQ}$ for all $n,s \geq 0$. Now by Theorem 1, there exists
a transcendental entire function $f$ with $f^{(s)}(\alpha_{2n+1})
\in \overline{\QQ}$ and $f^{(s)}(\alpha_{2n+2}) \in \C \backslash
\overline{\QQ}$, for each $n,s \geq 0$. Therefore it is plain that
$S_{f^{(s)}} = A$.

For the case that $A$ is finite, we can suppose $A =
\{\alpha_1,...,\alpha_m\}$. Take $E_{\alpha_1,s} = \cdots =
E_{\alpha_m,s} = \overline{\QQ}$ for all $s\geq 0$, and set
$E_{\alpha_k,s} = \C \backslash \overline{\QQ}$ for all $k>m, s\geq
0$. In case, $\overline{\QQ} \backslash A =
\{\alpha_1,...,\alpha_m\}$, we take $E_{\alpha_1,s} = \cdots
E_{\alpha_m,s} = \C \backslash \overline{\QQ}$ for all $s\geq 0$,
and set $E_{\alpha_k,s} = \overline{\QQ}$ for all $k>m, s\geq 0$.
Then for these two cases we proceed as in the proof above.

\end{proof}
\noindent

\vspace{0.5 cm}

\noindent
\textbf{\Large Acknowledgements}
\vspace{0.5 cm}

The authors would like to express their gratitude to Professor
Michel Waldschmidt for his instruction and guidance. We also thank
Georges Racinet for providing encouragement and assistance during
the evening workshops. Additionally, a debt is owed to all of the
students who participated; Chuangxun Cheng, Brian Dietel, Mathilde
Herblot, Holly Krieger, Jonathan Mason and Robert Wilson. Finally,
we must thank the organizers of the 2008 Arizona Winter School and
the hospitality of the University of Arizona for making this work
possible. The second author is a scholarship holder of CNPq. The
third is supported by Harrington fellowship.

%
%



\bibliographystyle{amsplain}

\vfill

Jingjing Huang (Penn State U.)
\href{huang@math.psu.edu}{huang@math.psu.edu}

Diego Marques (U. Federal do Ceara) \href{diego@mat.ufc.br}{diego@mat.ufc.br}

Martin Mereb  (U. Texas) \href{mmereb@gmail.com}{mmereb@gmail.com}

\end{document}